\newcommand{\starshaped}{\text{Star}}
\newcommand{\discardcolor}{\text{DiscardColor}}
\newcommand{\discardnode}{\text{DiscardNode}}
\newcommand{\renumnode}{\text{RenumNode}}
\newcommand{\renumcolor}{\text{RenumColor}}
\def\vecp{{\vec p\thinspace}}
\def\vecpprime{{\vec p^{\thinspace\prime}}}
\def\limplies{\rightarrow}
\def\bigand{\bigwedge}
\def\bigor{\bigvee}
\newcommand{\bracket}[1]{[#1]}
\newcommand{\kneser}[2]{\mathrm{Kneser}^{#1}_{#2}}
\title{Short Proofs of the \\ Kneser-Lov\'asz
Coloring Principle}
\author{
James Aisenberg \inst{1} \fnmsep \thanks{Supported
in part by NSF grants DMS-1101228 and CCF-1213151.}
\and
Maria Luisa Bonet \inst{2} \fnmsep \thanks{Supported in part by grant TIN2013-48031-C4-1.}
\and
Sam Buss \inst{1} \fnmsep \thanks{Supported
in part by NSF grants DMS-1101228 and CCF-1213151,
and Simons Foundation award 306202.}
\and
Adrian Cr\~{a}ciun\inst{3} \fnmsep \thanks{Supported in part by IDEI grant PN-II-ID-PCE-2011-3-0981``Structure and computational difficulty in combinatorial optimization: an interdisciplinary approach''.}
\and
Gabriel Istrate \inst{3} \fnmsep ${}^\dagger$}
\institute{
Department of Mathematics,
University of California, San Diego,
La Jolla, CA 92093-0112, USA
\email{jaisenberg@math.ucsd.edu,~sbuss@ucsd.edu}
\and
Computer Science Department,
Universidad Polit{\'e}cnica de Catalu{\~n}a,
Barcelona, Spain
\email{bonet@cs.upc.edu}
\and
West University of Timi\c{s}oara,
and the e-Austria Research Institute,
%V.~P\^{a}rvan 4, cam. 045B,
Timi\c{s}oara, RO-300223, Romania
\email{acraciun@ieat.ro,~gabrielistrate@acm.org}
}
\begin{document}
\maketitle
\begin{abstract}
We prove that the propositional translations
of the Kneser-Lov\'asz theorem have polynomial size
extended Frege proofs and quasi-polynomial size
Frege proofs.  We present a new
counting-based combinatorial proof of the Kneser-Lov\'asz
theorem that avoids the topological arguments
of prior proofs for all but finitely many cases for each~$k$.
We introduce a miniaturization
of the octahedral Tucker lemma, called the {\em truncated Tucker lemma}:
it is open whether its propositional translations have
(quasi-)polynomial size Frege or
extended Frege proofs.
\end{abstract}

\section{Introduction}

This paper discusses proofs of Lov\'asz's theorem about
the chromatic number of Kneser graphs, and the proof
complexity of propositional translations of the
Kneser-Lov\'asz theorem.  We give a new proof of the
Kneser-Lov\'asz theorem that uses a simple counting
argument instead of the topological arguments used
in prior proofs, for all but finitely many cases.
Our arguments can be formalized
in propositional logic to give
polynomial size extended Frege proofs and
quasi-polynomial size Frege proofs.

Frege systems are sound and complete proof systems for propositional logic
with a finite schema of axioms and inference rules.  The typical
example is a ``textbook style'' propositional proof system using
{\em modus ponens} as its only rule of inference, and all Frege
systems are polynomially equivalent to this system~\cite{CookReckhow:proofs}.
Extended Frege systems are Frege systems augmented with the extension
rule, which allows
variables to abbreviate complex formulas.
The {\em size} of a Frege or extended Frege proof is measured by counting the
number of symbols in the proof \cite{CookReckhow:proofs}.
Frege proofs are able to reason using Boolean formulas;
whereas extended Frege proofs can reason using Boolean circuits
(see~\cite{Jerabek:dualweakphp}).
Boolean formulas are conjectured to require exponential size to simulate
Boolean circuits; there is no known direct connection, but by analogy,
it is generally conjectured that there
is an exponential separation between
the sizes of Frege proofs and extended Frege proofs.
This is one of the important open questions in
proof complexity; for more on proof complexity see
e.g.~\cite{BBP:hardFrege,%
Buss:marktoberdorf2,%
Buss:lfcsSurveyAPAL,%
CookReckhow:proofs,%
Krajicek:book,%
Segerlind:ProofComplexity}.

As discussed by
Bonet, Buss and Pitassi~\cite{BBP:hardFrege} and more
recently by \cite{ABB:FranklsThm,Buss:QuasiPolyPHP},
we have hardly any examples of combinatorial
tautologies, apart from consistency statements,
that are conjectured to exponentially separate Frege and
extended Frege proof size.  These prior works discussed
a number of combinatorial
principles, including the pigeonhole principle and Frankl's theorem.
Istrate and Cr{\~a}ciun~\cite{IstrateCraciun:KneserLovasz}
recently proposed the Kneser-Lov\'asz principle as a
candidate for
exponentially separating Frege and
extended Frege proof size.
In this paper we give quasi-polynomial size Frege proofs
of the propositional translations of the Kneser-Lov\'asz
theorem for all fixed~$k$.
Thus they do not provide an exponential separation
of Frege and extended Frege proof size.

Our proof is also interesting because it gives a new method
of proving the Kneser-Lov\'asz theorem.
Prior proofs use
(at least implicitly) a topological fixed-point lemma.  The
most combinatorial proof is by Matou\v sek~\cite{Matousek:ComboProofsKneser} and is
inspired by the octahedral Tucker lemma; see also Ziegler~\cite{Ziegler:GeneralKneser}.
Our new proofs mostly avoid topological arguments
and use a counting argument instead.
These counting arguments can be formalized with Frege proofs.
Indeed, one of the important strengths of Frege proofs is that they can
reason about integer arithmetic.  These techniques
originated in polynomial size Frege proofs
of the pigeonhole principle~\cite{Buss:PHP} which used
carry-save-addition representations for vector
addition and multiplication in order to express and prove properties about
integer operations in polynomial size.
For the Kneser-Lov\'asz theorem,
the counting arguments
reduce the general case to
``small'' instances
of size $n \le 2 k^4$.  For fixed~$k$, there
are only finitely many small instances, and they can be verified
by exhaustive enumeration.  As we shall see, this leads to polynomial
size extended Frege proofs, and quasi-polynomial size Frege proofs,
for the Kneser-Lov\'asz principles.

It is surprising that the topological arguments can be largely
eliminated from the proof of the Kneser-Lov\'asz theorem.  The only
remaining use of topological arguments is to establish the
``small instances''.   It would be
interesting to give an additional argument that avoids having
to prove the small instances separately.
One possibility for this would be to adapt the proof
based on the octahedral Tucker lemma to quasi-polynomial size
Frege proofs. The first difficulty with this is that the octahedral Tucker
lemma has exponentially large propositional translations.
To circumvent this,
we present a miniaturized
version of the octahedral Tucker lemma called the {\em truncated
Tucker lemma}.  The truncated Tucker lemma has polynomial
size propositional translations.  We prove that
the Kneser-Lov\'asz tautologies have polynomial size
constant depth Frege proofs if the propositional
formulas for the
truncated Tucker lemma are given as additional
hypotheses.  However, it remains open whether these truncated
Tucker lemma principles have (quasi-)polynomial size
Frege or extended Frege proofs.

%% "All known proofs of the tight lower bound on the [the chromatic
%% number of the Kneser graph] are topological or at leaste imitate the
%% topological proofs." [Matousek, "Using the Borsuk-Ulam Theorem", pg. 60]

%% "In [Mat03], Kneser's conjecture was derived from Tucker's lemma by a
%% direct combinatorial argument, without using a continuous result of Borsuk-
%% Ulam type. Since the required instance of Tucker's lemma also has a
%% combinatorial proof, the resulting proof of the Lov\'asz-Kneser theorem
%% is purely combinatorial, although the topological inspiration remains
%% notable. " [Matousek, "Using the Borsuk-Ulam Theorem", pg. 60]

The $(n,k)$-Kneser graph is defined to be
the undirected graph whose vertices are the $k$-subsets of $\{1, \dots, n\}$;
there is an edge between two vertices iff those vertices have empty intersection.
The Kneser-Lov\'asz theorem states that Kneser graphs have a large chromatic
number:

\begin{theorem}[Lov\'asz \cite{Lovasz:KneserTheorem}]\label{thm:KneserLovasz}
Let $n \ge 2k > 1$.
The $(n,k$)-Kneser graph has no coloring with $n-2k+1$ colors.
\end{theorem}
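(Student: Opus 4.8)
The plan is to argue by contradiction using the Borsuk--Ulam theorem, in the guise of the Lyusternik--Shnirelmann--Borsuk covering theorem: if $S^d$ is covered by $d+1$ sets, each of which is either open or closed, then one of them contains a pair of antipodal points. Put $d = n-2k+1$ and suppose, for contradiction, that $c$ is a proper coloring of the $(n,k)$-Kneser graph with colors $\{1,\dots,d\}$. First I would fix points $v_1,\dots,v_n$ on the sphere $S^d \subseteq \mathbb{R}^{d+1}$ in \emph{general position}, meaning no $d+1$ of them lie on a common hyperplane through the origin; equivalently, for every $x \in S^d$ the ``equator'' $\{i : x\cdot v_i = 0\}$ contains at most $d$ indices. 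Such a configuration exists because the degenerate ones form a lower-dimensional exceptional set. For $x\in S^d$ write $H(x) = \{y\in S^d : x\cdot y > 0\}$ for the open hemisphere centered at $x$, and note $H(x)\cap H(-x)=\emptyset$.

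Next, for each color $i\le d$ let $A_i$ be the set of $x\in S^d$ such that $H(x)$ contains a $k$-subset $\{v_j : j\in S\}$ with $c(S)=i$, and put $A_{d+1} = S^d \setminus (A_1\cup\cdots\cup A_d)$. Each $A_i$ with $i\le d$ is open, $A_{d+1}$ is closed, and the $d+1$ sets cover $S^d$, so the covering theorem yields an $x$ with $x,-x$ both lying in some $A_\ell$. If $\ell\le d$, then $H(x)$ and $H(-x)$ each contain a $k$-set of color $\ell$, and these two $k$-sets are disjoint since $H(x)\cap H(-x)=\emptyset$; hence they are adjacent in the Kneser graph yet receive the same color, contradicting properness. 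If $\ell = d+1$, then neither $H(x)$ nor $H(-x)$ contains any $k$-set, so each contains at most $k-1$ of the points $v_j$, leaving at least $n - 2(k-1) = d+1$ of them on the equator of $x$ and contradicting general position. Either way we reach a contradiction, so no such coloring exists.

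I expect the main obstacle --- and the reason all prior proofs are ``topological'' --- to be the appeal to Borsuk--Ulam, which has no purely elementary counting proof; removing it is precisely the point of the new argument, which pushes the topology into only finitely many small instances ($n\le 2k^4$ for each fixed $k$) that must still be checked by other means. A secondary, routine technical point is justifying the openness of the $A_i$ and the existence of a general-position configuration. An alternative with the same skeleton is B\'ar\'any's proof: invoke Gale's lemma to place the $n$ points on $S^{d-1}$ so that \emph{every} open hemisphere already contains at least $k$ of them; this eliminates the closed piece $A_{d+1}$ and lets one apply the cleaner open-cover form of the covering theorem to the $d$ open sets $A_1,\dots,A_d$ on $S^{d-1}$, with the contradiction coming only from the first case above.
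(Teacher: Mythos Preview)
Your proof is correct: it is essentially Greene's argument via the Lyusternik--Shnirel'man--Borsuk covering form of Borsuk--Ulam, with B\'ar\'any's Gale-lemma variant noted at the end, and both are standard, complete proofs of the Kneser--Lov\'asz theorem. The two technical points you flag (openness of the $A_i$, existence of a general-position configuration) are indeed routine.

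However, your route is genuinely different from the paper's. The paper's proofs in Sections~\ref{sec:MathProofEF} and~\ref{sec:MathProofFrege} are counting arguments built on Lemma~\ref{lem:NonStarShaped}: a non-star-shaped color class has at most $k^2\binom{n-2}{k-2}$ vertices, so for $n$ above an explicit threshold $N(k)<k^4$ (respectively $N(k,1/2)<2k^4$) any putative $(n-2k+1)$-coloring must have at least one (respectively at least $n/2k$) star-shaped color classes; discarding a central node together with its color yields a smaller counterexample, and the descent bottoms out at finitely many base cases that are handled by appeal to the known (topological) truth of the theorem. Your argument buys a uniform, one-shot proof with no induction and no separately handled base cases, at the cost of invoking Borsuk--Ulam in full. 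The paper's argument buys formalizability in weak propositional systems: the counting and descent translate directly into polynomial-size extended Frege proofs and quasi-polynomial-size Frege proofs, which is the paper's main objective. Your Borsuk--Ulam step has no known such formalization, and isolating a proof-complexity-friendly replacement for it is exactly what the truncated Tucker lemma of Section~\ref{sec:Tucker} is designed to do.
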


It is well-known that the $(n,k)$-Kneser graph has a coloring with
$n-2k+2$ colors
(see e.g.\ the appendix to the arXiv version of this paper),
%(see Sect.~\ref{sec:OptimalColoring})
so the bound $n-2k+1$
is optimal. For $k=1$, the Kneser-Lov\'asz theorem is just the
pigeonhole principle.

Istrate and Cr{\~a}ciun~\cite{IstrateCraciun:KneserLovasz} noted that,
for fixed values of~$k$, the
propositional translations of the Kneser-Lov\'asz theorem have polynomial size in~$n$. They presented arguments that can be formalized by polynomial size Frege proofs
for $k=2$, and by polynomial size extended Frege proofs for $k=3$.
This left open
the possibility that the $k=3$ case could exponentially separate the Frege and
extended Frege systems. It was also left open whether
the $k>3$ case of the Kneser-Lov\'asz theorem gave tautologies
that require exponential size extended Frege proofs.
As discussed above, the present paper refutes these possibilities.
Theorems \ref{thm:MainThmEF} and~\ref{thm:MainThmFrege} summarize
these results.

Let $\bracket{n}$ be the set $\{1, \dots, n\}$;
members of~$\bracket n$ are called {\em nodes}.  We identify $\binom{n}{k}$
with the set of $k$-subsets of $\bracket{n}$,
the {\em vertices} of the $(n,k)$-Kneser graph.

\begin{definition}
An {\em $m$-coloring} of the $(n,k)$-Kneser graph
is a map~$c$ from $\binom{n}{k}$ to~$\bracket{m}$, such
that for $S,T \in \binom{n}{k}$, if $S\cap T = \emptyset$, then $c(S)\not=c(T)$.
If $\ell\in \bracket{m}$,
then the {\em color class} $P_\ell$ is the
set of vertices assigned the color~$\ell$ by~$c$.
\end{definition}

The formulas $\kneser{n}{k}$ are the natural
propositional translations
of the statement that there is no $(n-2k+1)$-coloring
of the $(n,k)$-Kneser graph:

\begin{definition} \label{def:kneser}
Let $n \ge 2k >1$, and $m=n-2k+1$.
For $S \in \binom{n}{k}$ and
$i \in \bracket{m}$, the propositional variable~$p_{S,i}$ has
the intended meaning that vertex~$S$ of the Kneser
graph is assigned the color~$i$.  The formula $\kneser{n}{k}$ is
\[
\bigand_{S \in \binom{n}{k}} \, \bigor_{i \in \bracket{m}} p_{S,i}
   ~\limplies~
   \bigor_{\substack{S,T \in \binom{n}{k} \\ S \cap T = \emptyset}} \,
     \bigor_{i \in \bracket{m}}
       \left ( p_{S,i} \wedge p_{T,i} \right ).
\]
\end{definition}

\begin{theorem}\label{thm:MainThmEF}
For fixed parameter $k \ge 1$, the propositional translations
$\kneser n k$ of the
Kneser-Lov\'asz theorem have polynomial size extended Frege proofs.
\end{theorem}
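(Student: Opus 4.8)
The plan is to prove $\kneser n k$ by reducing it, through a chain of polynomial-size extended Frege derivations, to the ``small'' instance $\kneser{2k^4}{k}$; for $2k \le n \le 2k^4$ we argue directly. We may assume $k \ge 2$, since for $k = 1$ the principle is the pigeonhole principle, which is known to have polynomial-size Frege proofs. For a \emph{fixed} value of $k$, the formula $\kneser{n'}{k}$ with $n' \le 2k^4$ has size bounded by a constant depending only on $k$, and it is a tautology by Theorem~\ref{thm:KneserLovasz}; hence it has an extended Frege proof of constant size, which we may simply take as given (such a proof exists by completeness and can in principle be located by brute-force search). The entire content of the argument is therefore in the reduction step, which we arrange so that, for $n > 2k^4$, the proof of $\kneser n k$ consists of a constant-size proof of $\kneser{2k^4}{k}$ followed by $n - 2k^4$ reduction steps, each of size polynomial in $n$; the total size is then polynomial in $n$.

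The reduction rests on the following counting lemma: if $n > 2k^4$ and the variables $p_{S,i}$ describe a proper $m$-coloring of the $(n,k)$-Kneser graph (with $m = n - 2k + 1$), then some color class $P_\ell$ is contained in a \emph{star}, i.e.\ there is a node $a$ with $a \in S$ for every vertex $S$ assigned color $\ell$. I would prove the lemma as follows. Since the $m$ color classes together cover all $\binom n k$ vertices, some class $P_\ell$ has at least $\binom n k / m$ members. Each color class is an intersecting family of $k$-subsets of $\bracket n$ (two disjoint vertices cannot share a color), and by a Hilton--Milner-type bound an intersecting family of $k$-subsets of $\bracket n$ that is not a star has at most $\binom{n-1}{k-1} - \binom{n-k-1}{k-1} + 1$ members. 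For fixed $k$ this upper bound grows only like $n^{k-2}$, whereas $\binom n k / m$ grows like $n^{k-1}$, so the bound on non-stars is exceeded once $n > 2k^4$; hence the large class $P_\ell$ must be a star. Granting the lemma, the reduction is immediate: deleting the node $a$ and the color $\ell$, the restriction of the coloring to the $k$-subsets of $\bracket n \setminus \{a\}$ uses only the $m-1$ colors of $\bracket m \setminus \{\ell\}$ and is still proper, so after renumbering nodes and colors it contradicts $\kneser{n-1}{k}$ --- note $m - 1 = (n-1) - 2k + 1$.

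To turn this into a polynomial-size extended Frege derivation of $\kneser n k$ from $\kneser{n-1}{k}$, I would assume the antecedent of $\kneser n k$ together with the negation of its consequent, i.e.\ a proper $m$-coloring of the $(n,k)$-Kneser graph. First I would prove once, at the Frege level, that a proper $m$-coloring in which every vertex of color $m$ contains node $n$ yields a proper $(m-1)$-coloring of the $(n-1,k)$-Kneser graph; here the new color variables are literally the old variables $p_{S,i}$ with $S \in \binom{n-1}{k}$ and $i \le m-1$, so no extension variables are needed for this part, and combining with $\kneser{n-1}{k}$ gives a contradiction in this special case. Substituting an appropriate permutation of nodes and of colors into this derivation produces, for each of the at most $n m$ pairs $(\ell,a)$, a derivation of the same size refuting the hypothesis ``proper $m$-coloring in which every vertex of color $\ell$ contains node $a$''. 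The counting lemma, once formalized, supplies exactly the disjunction over all such $(\ell,a)$ needed to combine these cases, and eliminating it refutes ``proper $m$-coloring'', which is $\kneser n k$. Stacking these reductions for $n' = 2k^4+1, \dots, n$ on top of the constant-size proof of $\kneser{2k^4}{k}$, and handling the values $2k \le n \le 2k^4$ by their constant-size direct proofs, gives an extended Frege proof of $\kneser n k$ of size polynomial in $n$, establishing Theorem~\ref{thm:MainThmEF}.

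The main obstacle is formalizing the counting lemma with polynomial-size extended Frege proofs. One must reason inside the proof system about the cardinalities of the color classes --- that they sum to $\binom n k$, so that one of them is large --- and establish the Hilton--Milner-type upper bound on non-star intersecting families, together with the arithmetic comparison between that bound and $\binom n k / m$. Extended Frege systems can express and manipulate such cardinality assertions by means of Boolean counting circuits, much as in the extended Frege proof of the pigeonhole principle, so the argument does go through; but carrying out the binomial estimates and the non-star bound cleanly is the delicate point. Performing the analogous counting with quasi-polynomial size Frege proofs instead --- using carry-save addition as in the Frege proof of the pigeonhole principle --- is what underlies Theorem~\ref{thm:MainThmFrege}.
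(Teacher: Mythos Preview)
Your overall plan---find a star-shaped color class, delete its central node and that color to pass from $n$ to $n{-}1$, and iterate down to a constant-size base case handled by brute force---is exactly the paper's. Two points deserve comment.

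First, for the counting lemma you invoke a Hilton--Milner-type bound. The paper instead proves the cruder estimate $|P_\ell|\le k^2\binom{n-2}{k-2}$ for any non-star class (Lemma~\ref{lem:NonStarShaped}) by a three-line argument: fix $S_0=\{a_1,\dots,a_k\}\in P_\ell$, pick $S_i\in P_\ell$ with $a_i\notin S_i$, and observe that any $S\in P_\ell$ is determined by some $a_i\in S\cap S_0$, some second element in $S\cap S_i$, and its remaining $k{-}2$ members. This is of the same order $O(n^{k-2})$ as Hilton--Milner but vastly easier to formalize. The paper also applies it globally---if no class were a star, the $m$ classes would cover at most $m\cdot k^2\binom{n-2}{k-2}=O(n^{k-1})<\binom{n}{k}$ vertices---so there is no need to single out a largest class.

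Second, and more seriously, your formalization by substitution and case-split does not stay polynomial. For each of the $nm$ pairs $(\ell,a)$ your substituted derivation uses $\kneser{n-1}{k}$ \emph{in a different tuple of variables}, and a Frege or extended Frege proof has no way to obtain these $nm$ distinct formulas from a single proved copy except by substituting into the entire proof of $\kneser{n-1}{k}$. Your recurrence is therefore $S(n)\ge nm\cdot S(n{-}1)+\mathrm{poly}(n)$, which explodes. The paper uses the extension rule precisely to collapse this case split: it introduces formulas $\discardcolor(\ell)$ and $\discardnode(i)$ selecting the \emph{least} star, and then extension variables $p'_{S,j}$ whose defining formula already incorporates the selected pair; a \emph{single} instance $\kneser{n-1}{k}(\vecpprime)$ then suffices, the recurrence becomes $S(n)\le S(n{-}1)+\mathrm{poly}(n)$, and the total is polynomial. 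That is where ``extended'' earns its keep in Theorem~\ref{thm:MainThmEF}. (Your closing remark on Theorem~\ref{thm:MainThmFrege} has a related oversight: one cannot simply rerun the same chain with carry-save counting, since unwinding $n$ rounds of polynomial-size extension definitions is exponential in Frege; the paper instead proves a stronger lemma yielding $\Omega(n/k)$ stars at once, so that only $O(\log n)$ reduction rounds are needed and the unwound Frege proof is quasi-polynomial.)
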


\begin{theorem}\label{thm:MainThmFrege}
For fixed parameter $k \ge 1$, the propositional translations
$\kneser n k$ of the
Kneser-Lov\'asz theorem have quasi-polynomial size Frege proofs.
\end{theorem}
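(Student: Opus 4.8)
\textit{Proof proposal.}
The plan is to formalize the new counting-based combinatorial proof of the Kneser--Lov\'asz theorem. That proof has two parts: a \emph{reduction step}, which from an alleged $(n-2k+1)$-coloring of the $(n,k)$-Kneser graph produces a proper coloring of a strictly smaller Kneser graph using proportionally fewer colors; and \emph{base cases} $n \le 2k^4$, which for fixed $k$ are finitely many tautologies, each having a Frege proof whose size is a constant depending only on $k$. Iterating the reduction drives $n$ down into the base-case range. Carrying this out directly, using extension variables to name the coloring produced at each round, gives the polynomial-size extended Frege proofs of Theorem~\ref{thm:MainThmEF}. For Theorem~\ref{thm:MainThmFrege} we run the same argument without the extension rule: the coloring constructed at round $t$ must then be written as an explicit formula in the original variables $p_{S,i}$, and this formula grows by a polynomial factor per round. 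What keeps the total quasi-polynomial rather than exponential is that each reduction step shrinks the number of colors by a factor of roughly $1-1/k$, so only $O(k\log n)$ rounds are needed, and $\mathrm{poly}(n)^{O(k\log n)} = n^{O_k(\log n)}$ is quasi-polynomial for fixed $k$.

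In more detail, the reduction step rests on a counting lemma: a color class that is an intersecting family of $k$-sets with no common node has size at most a Hilton--Milner-type bound, which for fixed $k$ is $O(n^{k-2})$ --- a factor of $\Theta(n)$ below the maximum possible $\binom{n-1}{k-1}$. Since the $m=n-2k+1$ color classes partition the $\binom{n}{k}$ vertices, summing these bounds forces at least $n/k - O_k(1)$ of the color classes to be \emph{centered}, i.e.\ to have a common node. Let $A$ be the set of canonical centers (say, the least common node of each centered class); then every $k$-subset of $\bracket{n} \setminus A$ is colored by a non-centered color, so we obtain a proper coloring of the $(|\bracket{n}\setminus A|,k)$-Kneser graph using only $m' \le n(1-1/k)+O_k(1)$ colors, hence --- after restricting to any $n' := m' + 2k - 1 < n$ of the remaining nodes --- a coloring witnessing the failure of $\kneser{n'}{k}$. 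To express this in Frege with only polynomial blowup, one writes the new variable asserting ``vertex $S$ of the $(n',k)$-graph has color $j$'' as a polynomial-size disjunction over the possible values of $A$ and the induced renumberings of nodes and colors, each disjunct a conjunction of literals among the $p_{S,i}$. The arithmetic in the counting lemma --- comparing binomial coefficients and adding $\Theta(n)$ integers of $\Theta(k\log n)$ bits each --- is handled by the polynomial-size Frege techniques for integer arithmetic developed for the propositional pigeonhole principle.

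To assemble the proof of $\kneser{n}{k}$: assume for contradiction that the coloring variables satisfy the hypothesis and define no monochromatic disjoint pair, apply the reduction formula $O(k\log n)$ times, and observe that the resulting formulas define a coloring of some $(n',k)$-Kneser graph with $n'\le 2k^4$ and have size $n^{O_k(\log n)}$. Substituting these formulas for the variables of the constant-size Frege proof of $\kneser{n'}{k}$ derives a contradiction, yielding a Frege proof of $\kneser{n}{k}$ of quasi-polynomial size. The combinatorial side conditions needed at each round --- that $A$ is well-defined, that the new coloring is proper, and that $n' < n$ so the recursion terminates (with the degenerate case $m'\le 0$ handled directly, since then some $k$-subset of $\bracket{n}\setminus A$ would have no legal color) --- all follow from the counting lemma and are likewise provable in polynomial-size Frege.

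I expect the main obstacle to be the polynomial-size Frege formalization of the counting lemma: one needs a Frege-provable upper bound on non-centered intersecting families (a Hilton--Milner-type estimate, or a cruder bound that is still a $\Theta(n)$-factor below $\binom{n-1}{k-1}$ but easier to certify), together with a careful treatment of the addition of polynomially many multi-digit binomial coefficients, all without the abbreviating power of extension variables. A secondary obstacle is bookkeeping: controlling the renumbering of nodes and colors across the $O(k\log n)$ rounds so that the per-round blowup is genuinely a fixed polynomial in $n$, since any super-polynomial per-round cost would compound to worse than quasi-polynomial.
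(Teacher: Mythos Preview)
Your proposal is correct and follows essentially the same route as the paper: a counting bound on non-star-shaped (``centered'') color classes forces a constant fraction of the classes to be star-shaped, one deletes their centers together with those colors to pass to a Kneser instance on roughly $(1-\Theta(1/k))n$ nodes, and after $O(\log n)$ rounds one lands in the finitely many base cases; unwinding the $O(\log n)$ levels of polynomial-size definitions yields quasi-polynomial Frege proofs. The paper uses the elementary bound $|P_\ell|\le k^2\binom{n-2}{k-2}$ rather than Hilton--Milner, and encodes the reduction via per-node/per-color predicates $\discardnode$, $\discardcolor$, $\renumnode$, $\renumcolor$ (with counting) rather than a ``disjunction over the possible values of $A$''---your phrasing there should be read this way, since literally ranging over subsets $A$ would not be polynomial.
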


When both $k$ and~$n$ are allowed to vary, it is
open  whether the $\kneser n k$ tautologies have quasi-polynomial
size (extended) Frege proofs, or equivalently,
have proofs with size quasi-polynomially bounded in terms of~$n^k$.

\section{Mathematical Arguments}\label{sec:MathProof}

Section~\ref{sec:MathProofEF} gives the new proof of the
Kneser-Lov\'asz theorem; this is later shown to be formalizable
with polynomial size extended Frege proofs.
Section~\ref{sec:MathProofFrege} gives a slightly more complicated
but more efficient
proof, later shown to be formalizable
with quasi-polynomial size Frege proofs.
The next definition and lemma are crucial for
Sects.\ \ref{sec:MathProofEF} and~\ref{sec:MathProofFrege}.

Any two vertices
in a color class~$P_\ell$ have non-empty intersection.  One way this can
happen is for the color class to be ``star-shaped'':

\begin{definition}\label{def:StarShaped}
A color class $P_\ell$ is {\em star-shaped} if
$\bigcap P_\ell$ is non-empty.
If $P_\ell$ is star-shaped, then any
$i\in \bigcap P_\ell$ is called a {\em central element} of $P_\ell$.
\end{definition}

The next lemma bounds the size of color classes
that are not star-shaped. It will be used in our proof of the Kneser-Lov\'asz theorem
to establish the existence of star-shaped color classes.
The idea is that non-star-shaped color classes are too small
to cover all $\binom n k$ vertices.

\begin{lemma}\label{lem:NonStarShaped}
Let $c$ be a coloring of $\binom{n}{k}$.
If $P_\ell$ is not star-shaped, then
\[
|P_\ell| ~ \le ~k^2 \binom{n-2}{k-2}.
\]
\end{lemma}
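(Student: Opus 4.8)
The plan is to prove that every vertex in a non-star-shaped color class $P_\ell$ must contain at least one of a small collection of $k^2$ pairs of nodes; since each fixed pair $\{i,j\}$ is contained in exactly $\binom{n-2}{k-2}$ of the $k$-subsets of $[n]$, a union bound then gives $|P_\ell|\le k^2\binom{n-2}{k-2}$. The only property of $P_\ell$ I would use is that it is an \emph{intersecting family}: any two of its members share a node, since two disjoint $k$-sets are adjacent in the Kneser graph and so cannot receive the same color. If $P_\ell=\emptyset$ the bound is trivial, so fix a vertex $A=\{a_1,\dots,a_k\}\in P_\ell$.

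Because $P_\ell$ is not star-shaped, $\bigcap P_\ell=\emptyset$, so for each $i\in[k]$ there is a vertex $B_i\in P_\ell$ with $a_i\notin B_i$. Let $\mathcal{Q}=\{\{a_i,b\}:i\in[k],\ b\in B_i\}$. Each $\{a_i,b\}$ is a genuine $2$-set since $b\in B_i$ but $a_i\notin B_i$, and as $|B_i|=k$ there are at most $k$ choices of $b$ for each of the $k$ values of $i$, so $|\mathcal{Q}|\le k^2$. The heart of the argument is then the claim that every $D\in P_\ell$ contains a pair from $\mathcal{Q}$: since $P_\ell$ is intersecting we may pick $a_i\in D\cap A$, and then again by intersectingness pick $b\in D\cap B_i$; since $a_i\notin B_i$ we have $b\neq a_i$, so $\{a_i,b\}\subseteq D$ and $\{a_i,b\}\in\mathcal{Q}$. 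Summing the contributions $\binom{n-2}{k-2}$ over the at most $k^2$ pairs of $\mathcal{Q}$ finishes the proof.

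I do not anticipate a real obstacle: the whole argument is the double use of the intersecting property — first against the fixed vertex $A$, then against the node-specific witness $B_i$ — and the only looseness, namely that the pairs in $\mathcal{Q}$ need not be distinct and that one might fear needing more than $k$ witnesses, is absorbed harmlessly into the crude bound $k^2$. The step most worth stating carefully is the claim that each $D\in P_\ell$ meets both $A$ and some $B_i$ in a way that pins down a pair; everything else, including degenerate cases such as $k=1$ (where no non-star-shaped class exists and $\binom{n-2}{k-2}=0$), is routine.
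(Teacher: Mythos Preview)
Your proof is correct and is essentially the same as the paper's: fix a vertex $A=\{a_1,\dots,a_k\}$, choose witnesses $B_i\in P_\ell$ with $a_i\notin B_i$, and observe that any $D\in P_\ell$ must contain some $a_i$ together with some element of $B_i$, giving at most $k^2$ pairs each extending to $\binom{n-2}{k-2}$ vertices. The only difference is packaging --- you name the pair set $\mathcal{Q}$ and invoke a union bound, whereas the paper phrases it as a specification/counting argument --- but the content is identical.
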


\begin{proof}
Suppose $P_\ell$ is not star-shaped.
If $P_\ell$ is empty, the claim is trivial.  So suppose $P_\ell\not=\emptyset$,
and let $S_0=\{a_1, \dots, a_k\}$ be some element of~$P_\ell$.
Since $P_\ell$ is not star-shaped, there must be sets $S_1, \dots, S_k \in P_\ell$
with $a_i \notin S_i$ for $i=1, \dots, k$.

To specify an arbitrary element~$S$ of~$P_\ell$, we do the following.
Since $S$ and $S_0$ have the same color,
$S \cap\penalty10000 S_0$ is non-empty.
We first specify some $a_i \in S \cap\penalty10000 S_0$.
Likewise, $S \cap S_i$ is non-empty; we second specify some $a_j \in S  \cap S_i$.
By construction, $a_i \neq a_j$,
so $S$ is fully specified by the $k$ possible values for~$a_i$, the $k$ possible
values for~$a_j$,
and the $\binom{n-2}{k-2}$ possible values for the remaining members of~$S$.
Therefore,
$|P_\ell| \le k^2 \binom{n-2}{k-2}$.
\qed
\end{proof}

\subsection{Argument for Extended Frege Proofs}\label{sec:MathProofEF}
\label{sec:KneserLovaszExtendedFrege}

Let $k>1$ be fixed. We prove the Kneser-Lov\'asz theorem by
induction on~$n$. The base cases for the induction
are $n=2k, \dots, N(k)$ where $N(k)$ is
the constant depending on~$k$ specified in Lemma~\ref{lem:SomeStarShaped}.
We shall show that $N(k)$ is no greater than~$k^4$.
Since $k$ is fixed, there are only finitely many base cases.
Since the Kneser-Lov\'asz theorem
is true, these base cases can all be proved by
a fixed Frege proof of finite size (depending on~$k$).
Therefore, in our proof below, we only show the induction step.

\begin{lemma}\label{lem:SomeStarShaped}
Fix $k>1$. There is an $N(k)$ so that, for $n>N(k)$, any
$(n-2k+1)$-coloring of~$\binom{n}{k}$ has at least one star-shaped color class.
\end{lemma}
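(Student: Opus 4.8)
The plan is to prove the contrapositive by a one-line counting argument built on top of Lemma~\ref{lem:NonStarShaped}. Suppose $n \ge 2k$ and that $c$ is an $(n-2k+1)$-coloring of $\binom{n}{k}$ in which \emph{no} color class is star-shaped; write $m = n-2k+1$ for the number of colors. Every vertex receives exactly one color, so $\binom{n}{k} = \sum_{\ell=1}^{m} |P_\ell|$, and by Lemma~\ref{lem:NonStarShaped} each summand satisfies $|P_\ell| \le k^2\binom{n-2}{k-2}$. Hence
\[
\binom{n}{k} ~\le~ (n-2k+1)\,k^2\binom{n-2}{k-2}.
\]
The idea is then to show that this inequality is false once $n$ is large enough in terms of $k$, which forces the existence of a star-shaped color class and simultaneously pins down a clean value for $N(k)$.

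To carry this out I would divide both sides by $\binom{n-2}{k-2}$ (positive since $n \ge 2k$ and $k \ge 2$), using the elementary identity $\binom{n}{k} = \binom{n-2}{k-2}\cdot\frac{n(n-1)}{k(k-1)}$. The displayed inequality becomes $\frac{n(n-1)}{k(k-1)} \le (n-2k+1)k^2$, i.e.
\[
n(n-1) ~\le~ (n-2k+1)\,k^3(k-1).
\]
Since $k \ge 2$ gives $n-2k+1 < n$, and $k^3(k-1) < k^4$, the right-hand side is strictly less than $n\,k^4$, so the inequality would imply $n-1 < k^4$, that is $n \le k^4$. Contrapositively, if $n > k^4$ then every $(n-2k+1)$-coloring of $\binom{n}{k}$ must have a star-shaped color class, so we may take $N(k) = k^4$, which is the bound claimed in the surrounding text.

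There is no real obstacle here: the argument is a pigeonhole count against the size bound from Lemma~\ref{lem:NonStarShaped}. The only point needing a little care is the arithmetic that extracts the clean bound $N(k)\le k^4$ rather than some messier polynomial in $k$, so that the finitely many base cases $n = 2k,\dots,N(k)$ can be described uniformly later; in fact, estimating the right-hand side above more tightly as $(n-2k+1)k^3(k-1) < n(k^4-k^3)$ shows that already $n \ge k^4 - k^3 + 1$ suffices, comfortably inside $N(k) = k^4$.
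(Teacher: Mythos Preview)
Your proof is correct and follows essentially the same approach as the paper: assume no star-shaped class, sum the bounds from Lemma~\ref{lem:NonStarShaped} over the $n-2k+1$ colors to get $(n-2k+1)k^2\binom{n-2}{k-2}\ge\binom{n}{k}$, then simplify to $(n-2k+1)k^3(k-1)\ge n(n-1)$ and extract a contradiction for large~$n$. The only cosmetic difference is that the paper states the existence of $N(k)$ via the asymptotic $\Theta(n^{k-1})$ vs.\ $\Theta(n^k)$ and derives the explicit bound $N(k)<k^4$ separately afterward, whereas you fold the explicit arithmetic directly into the proof.
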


\begin{proof}
Suppose that a coloring~$c$ has no star-shaped color class.
Since there are $n-2k+1$ many color classes,
Lemma~\ref{lem:NonStarShaped} implies that
\begin{equation}\label{eq:KneserExtendedFrege}
(n-2k+1) \cdot k^2 \binom{n-2}{k-2} ~\geq~ \binom{n}{k}.
\end{equation}
For fixed~$k$, the left-hand side of~(\ref{eq:KneserExtendedFrege})
is $\Theta(n^{k-1})$ and the
right-hand side is $\Theta(n^k)$.
Thus, there exists an~$N(k)$
such that
(\ref{eq:KneserExtendedFrege}) fails for all $n>N(k)$.
Hence for $n>N(k)$, there must be
at least one star-shaped color class.
\qed
\end{proof}

To obtain an upper bound on the value of~$N(k)$, note that
(\ref{eq:KneserExtendedFrege}) is
equivalent to
\begin{equation}\label{eq:KnesereF2}
(n-2k+1) k^3(k-1) ~\ge~ n(n-1).
\end{equation}
Since $2k-1\ge 1$, (\ref{eq:KnesereF2}) implies that
$(n-1)k^4 > n(n-1)$ and thus that $n < k^4$.
Thus, (\ref{eq:KneserExtendedFrege}) will be
false if $n\ge k^4$; so
$N(k) < k^4$.

We are now ready to give our first proof of the
Kneser-Lov\'asz theorem.

\begin{proof}[of Theorem~\ref{thm:KneserLovasz}, except for base cases]
Fix $k>1$. By Lemma~\ref{lem:SomeStarShaped}, there is some~$N(k)$ such that
for $n>N(k)$, any $(n-2k+1)$-coloring~$c$ of~$\binom{n}{k}$ has a star-shaped
color class.  As discussed above, the cases of $n \le N(k)$ cases are handled by
exhaustive search and the truth of the Kneser-Lov\'asz theorem.
For $n>N(k)$, we prove the claim by infinite descent.
In other words, we show that if $c$ is an $(n-2k+1)$-coloring
of~$\binom{n}{k}$, then there is some~$c^\prime$ which is an
$((n-1) - 2k+1)$-coloring of~$\binom{n-1}{k}$.

By Lemma~\ref{lem:SomeStarShaped}, the coloring~$c$ has some star-shaped
color class~$P_\ell$ with central element~$i$.
Without loss of generality, $i=n$ and $\ell=n-2k+1$.
Let
\[
c^\prime ~=~ c \restriction {\textstyle \binom{n-1}{k}}
\]
be the restriction of~$c$ to the domain $\binom{n-1}{k}$.
This discards the central element~$n$ of~$P_\ell$, and
thus all vertices with color~$\ell$.  Therefore,
$c^\prime$ is an $((n-1)-2k+1)$-coloring of $\binom{n-1}{k}$.
This completes the proof.
\qed
\end{proof}

\subsection{Argument for Frege Proofs}\label{sec:MathProofFrege}
\label{sec:KneserLovaszFrege}

We now give a second proof of the Kneser-Lov\'asz theorem.  The proof
above required $n-N(k)$ rounds of infinite descent to transform
a Kneser graph on $n$ nodes to one on $N(k)$ nodes.  Our second proof
replaces this with only $O(\log n)$ many rounds, and this efficiency will be key
for formalizing this proof with quasi-polynomial size Frege proofs
in Sect.~\ref{sec:FormalF}.

We refine Lemma~\ref{lem:SomeStarShaped} to show that for~$n$
sufficiently large, there are many (i.e., a constant fraction)
star-shaped color classes.
The idea is to combine the upper bound of Lemma~\ref{lem:NonStarShaped}
on the size of non-star-shaped color classes with the trivial upper bound of
$\binom{n-1}{k-1}$ on the size of star-shaped color classes.

\begin{lemma}\label{lem:ManyStarShaped}
Fix $k>1$ and $0<\beta<1$. Then there exists an $N(k,\beta)$ such
that for $n> N(k,\beta)$, if $c$
is an $(n-2k+1)$-coloring of $\binom{n}{k}$, then $c$ has at
least $\frac{n}{k}\beta$ many star-shaped color classes.
\end{lemma}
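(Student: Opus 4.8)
The plan is to count the $\binom nk$ vertices of the Kneser graph by color class, using two complementary upper bounds: the \emph{trivial} bound on star-shaped classes and the bound of Lemma~\ref{lem:NonStarShaped} on the others. First I would note that if $P_\ell$ is star-shaped with central element~$i$, then every vertex in~$P_\ell$ is a $k$-subset of~$\bracket n$ containing~$i$, so $|P_\ell| \le \binom{n-1}{k-1}$; and by Lemma~\ref{lem:NonStarShaped}, every non-star-shaped class has size at most $k^2\binom{n-2}{k-2}$.

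Next, suppose $c$ has exactly $s$ star-shaped color classes, hence at most $n-2k+1$ non-star-shaped ones. Since the color classes cover all $\binom nk$ vertices, summing these bounds gives
\[
s\binom{n-1}{k-1} + (n-2k+1)\,k^2\binom{n-2}{k-2} ~\ge~ \binom nk .
\]
Using the identities $\binom nk = \tfrac nk\binom{n-1}{k-1}$ and $\binom{n-2}{k-2} = \tfrac{k(k-1)}{n(n-1)}\binom nk$, this rearranges to
\[
s ~\ge~ \frac nk\left(1 - \frac{(n-2k+1)\,k^3(k-1)}{n(n-1)}\right).
\]

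Finally, for fixed~$k$ the subtracted term is $O(1/n)$: since $n-2k+1 \le n$, it is at most $\tfrac{k^3(k-1)}{n-1}$. So I would take $N(k,\beta)$ to be any integer with $N(k,\beta) \ge \max\{2k,\ 1 + k^3(k-1)/(1-\beta)\}$; then for $n > N(k,\beta)$ we have $\tfrac{k^3(k-1)}{n-1} < 1-\beta$, whence $s \ge \tfrac nk\beta$, as required.

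I do not anticipate a real obstacle here: this is an elementary counting/averaging computation that simply makes Lemma~\ref{lem:SomeStarShaped} quantitative. The only mild subtlety is whether the crude bound ``at most $n-2k+1$ non-star-shaped classes'' (discarding the $-s$) costs too much; it does not, precisely because the non-star-shaped contribution is lower order in~$n$. Should a sharper $N(k,\beta)$ ever be wanted, one could instead retain the $-s$ term and solve the resulting linear inequality for~$s$ exactly, but this is unnecessary for the application in Sect.~\ref{sec:FormalF}.
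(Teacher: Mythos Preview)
Your proof is correct and follows essentially the same counting argument as the paper: bound each star-shaped class by $\binom{n-1}{k-1}$, each non-star-shaped class by Lemma~\ref{lem:NonStarShaped}, and compare the sum to the total $\binom nk$. The only cosmetic difference is that the paper retains the exact count $n-2k+1-\alpha$ of non-star-shaped classes and argues by contradiction after substituting $\alpha=\tfrac nk\beta$ (which requires the side observation $\binom{n-1}{k-1}>k^2\binom{n-2}{k-2}$), whereas you use the cruder bound $n-2k+1$ and solve directly for~$s$; as you correctly note, this costs nothing asymptotically and even yields a slightly smaller admissible $N(k,\beta)$.
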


\begin{proof}
The value of $N(k,\beta)$ can be set equal to $\frac{k^3(k-\beta)}{1-\beta}$.
Let $n> \frac{k^3(k-\beta)}{1-\beta}$,
 and suppose $c$ is an
$(n-\penalty10000 2k+\penalty10000 1)$-coloring of $\binom n k$.
Let $\alpha$ be the number of star-shaped color classes of~$c$.
It is clear that an upper bound on the size of each star-shaped color class is
$\binom{n-1}{k-1}$.
There are $n-\alpha-2k+1$ many non-star-shaped classes,
and Lemma~\ref{lem:NonStarShaped} bounds their size by
$k^2\binom{n-2}{k-2}$.
This implies that
\begin{equation}\label{eq:iSS1}
\binom{n-1}{k-1}\alpha+k^2\binom{n-2}{k-2}(n-\alpha-2k+1)~\ge~\binom{n}{k}.
\end{equation}
Assume for a contradiction that $\alpha<\frac{n}{k} \beta$.
Since $n >\frac{k^3(k-\beta)}{1-\beta}$, $0<\beta<\penalty10000 1$, and $k\ge 2$,
we have $n-1 > k^3(k-1) > k^2(k-1)$.
Therefore, $\binom{n-1}{k-1} > k^2\binom{n-2}{k-2}$, and
if $\alpha$ is replaced by the larger value $\frac n k \beta$,
the left hand side of~(\ref{eq:iSS1}) increases.
Thus,
\[
\binom{n-1}{k-1}\frac{n}{k}\beta
  + k^2\binom{n-2}{k-2} \Bigl (n-\frac{n}{k}\beta-2k+1 \Bigr )
  ~>~ \binom{n}{k}.
\]
Since $\binom{n-1}{k-1}\frac{n}{k}=\binom{n}{k}$ and
$n-\frac{n}{k}\beta-2k+1=\frac{k-\beta}{k}n-2k+1$,
\begin{equation}\label{eq:April}
k^2\binom{n-2}{k-2}\Bigl(\frac{k-\beta}{k}n-2k+1\Bigr)
   ~>~ (1-\beta)\binom{n}{k}.
\end{equation}
% Expanding the binomial coefficients yields
% \[
% k^3(k-1)\Bigl(\frac{k-\beta}{k}n-2k+1\Bigr)
%    ~>~ (1-\beta)n(n-1).
% \]
We have $\frac{k-\beta}{k}(n-1) > \frac{k-\beta}{k}n-2k+1$.
Therefore, (\ref{eq:April}) gives
\[
k^3(k-1)\frac{k-\beta}{k}(n-1)~>~(1-\beta)n(n-1).
\]
Dividing by $n-1$ gives $k^3(k-\beta) ~>~ (1-\beta)n$,
contradicting $n>\frac{k^3(k-\beta)}{1-\beta}$.
\qed
\end{proof}

We now give our second proof of the Kneser-Lov\'asz theorem.

\begin{proof}[of Theorem~\ref{thm:KneserLovasz}, except for base cases]
Fix $k>1$. By Lemma~\ref{lem:ManyStarShaped} with $\beta = 1/2$,
if $n>N(k,1/2)$ and $c$ is an $(n-2k+1)$-coloring of
$\binom{n}{k}$, then $c$ has at least $n/2k$ many star-shaped color classes.
We prove the Kneser-Lov\'asz theorem by induction on~$n$.
The base cases are for $2k \le n \le N(k,1/2)$, and there are only finitely
of these, so they can be exhaustively proven.
For $n>N(k,1/2)$, we structure the induction proof as an infinite descent.
In other words, we show that if $c$ is an $(n-2k+1)$-coloring of~$\binom{n}{k}$,
then there is some~$c^\prime$ that is an
$((n-\frac{n}{2k}) - 2k+1)$-coloring of $\binom{n-\frac{n}{2k}}{k}$.
For simplicity of notation, we assume $\frac n{2k}$
is an integer. If this is not the case, we really mean
to round up to the nearest integer $\lceil\frac n {2k} \rceil$.

By permuting the color classes and the nodes,
we can assume w.l.o.g.\ that the $\frac n {2k}$
color classes~$P_\ell$ for $\ell= n-\frac n{2k} -\penalty10000 2k+\penalty10000 2, \ldots, n-2k+1$ are
star-shaped, and each such $P_\ell$ has central element~$\ell+2k-1$.
That is, the
last $\frac n{2k}$ many color classes are star-shaped and their central elements
are the last $\frac n{2k}$ nodes in~$\bracket{n}$.
(It is possible that some star-shaped color classes share central nodes;
in this case, additional nodes can be discarded so that $n/2k$ are
discarded in all.)

Define $c^\prime$ to be the coloring of $\binom{n-n/2k}{k}$ which assigns the
same colors as~$c$.
The map~$c^\prime$ is a
$(\frac{2k-1}{2k}n - 2k+1)$-coloring of
$\displaystyle \binom{\frac{2k-1}{2k}n}{k}$,
since $n-\frac{n}{2k} = \frac{2k-1}{2k}n$.
This completes the proof of the induction step.
\qed
\end{proof}

When formalizing the above argument with quasi-polynomial size Frege proofs,
it will be important to know how many iterations of the procedure
are required to reach the base cases, so let us calculate this.

After $s$ iterations of this procedure, we have a
$((\frac{2k-1}{2k})^s n - 2k+1)$-coloring of
$\displaystyle \binom{(\frac{2k-1}{2k})^s n}{k}$.
We pick $s$ large enough so that $(\frac{2k-1}{2k})^s n$
is less than $N(k,1/2)$. In other words, since $k$ is constant,
\[
s  ~=~ \log_{\frac{2k}{2k-1}}\Bigl(\frac{n}{k^3(2k-1)}\Bigr) ~=~ O( \log n )
\]
will suffice, and
only $O(\log n)$ many rounds of the procedure are required.

We do not know if the bound in Lemma~\ref{lem:ManyStarShaped}
is optimal or close to optimal.  An appendix in the arXiv version
of this paper
will discuss the best examples we know of colorings with large numbers
of non-star-shaped color classes.

\section{Formalization in Propositional Logic}
\subsection{Polynomial Size Extended Frege Proofs}
\label{sec:FormalEF}

We sketch the formalization
of the argument in Sect.~\ref{sec:KneserLovaszExtendedFrege}
as a polynomial size extended Frege proof, establishing
Theorem~\ref{thm:MainThmEF}.
We concentrate on showing how to express concepts such
as ``star-shaped color class'' with polynomial size propositional
formulas.  For space reasons, we omit the straightforward details
of how (extended) Frege proofs can prove properties of
these concepts.

Fix values for $k$ and~$n$ with $n>N(k)$.  We describe an extended Frege
proof of $\kneser n k$.
We have variables $p_{S,j}$ (recall Definition~\ref{def:kneser}), collectively denoted
just $\vecp$.   The proof assumes $\kneser n k(\vecp)$
is false, and proceeds by contradiction.
The main step is to define new variables $\vecp^\prime$
and prove that $\kneser{n-1} k (\vecpprime)$ fails.
This will be repeated until reaching a Kneser graph over only
$N(k)$ nodes.

For this, let $\starshaped(i, \ell)$ be a formula
that is true when $i \in \bracket{n}$ is a central element
of the color class~$P_\ell$; namely,
\[
\starshaped(i,\ell) ~:=~
   \bigand_{S\in \binom n k , \, i\notin S} \neg p_{S,\ell}.
\]
We use
$\starshaped(\ell) := \bigor_i \starshaped(i,\ell)$
to express that $P_\ell$ is star-shaped.

The extended Frege proof defines the instance of
the Kneser-Lovasz principle $\kneser{n-1}k$ by discarding one node and one
color.  The first star-shaped color class~$P_\ell$ is discarded; accordingly,
we let
\[
\discardcolor(\ell) ~:=~ \starshaped(\ell) \land \bigand_{\ell^\prime<\ell} \neg \starshaped(\ell^\prime).
\]
The node to be discarded is the least central element of the discarded~$P_\ell$:
\[
\discardnode(i) := \bigor_{\ell} \Bigl[ \discardcolor(\ell) \land
    \starshaped(i,\ell) \land \bigand_{i^\prime<i} \neg\starshaped(i^\prime,\ell) \Bigr].
\]
After discarding the node~$i$ and color class~$P_\ell$, the remaining nodes
and colors are renumbered to the ranges $[n-1]$ and~$[n-2k]$, respectively.
In particular, the ``new'' color~$j$ (in the instance of $\kneser {n-1} k$)
corresponds
to the ``old'' color $j^{-\ell}$ (in the instance of $\kneser n k$) where
\[
j^{-\ell} = \begin{cases}
j    & \text{if $j<\ell$} \\
j +1 & \text{if $j \ge \ell$.}
\end{cases}
\]
And, if $S=\{i_1, \dots, i_k \} \in \binom{n-1}{k}$
is a ``new'' vertex (for the $\kneser {n-1} k$ instance), then
it corresponds to the ``old'' vertex $S^{-i}\in \binom n k$ (for the instance
of $\kneser n k$),
where $S^{-i} = \{i_1^\prime, i_2^\prime, \dots, i_k^\prime \}$ with
\[
i_t^\prime = \begin{cases}
i_t    & \text{if $i_t <i$} \\
i_t +1 & \text{if $i_t \ge i$.}
\end{cases}
\]
For each $S \in \binom{n-1}{k}$ and $j \in \bracket{n-1}$,
the extended Frege proof uses the extension rule to introduce a new variable
$p^\prime_{S,j}$ defined as follows
\[
p^\prime_{S,j} \equiv \bigor_{i, \ell } \left ( \discardnode(i)\wedge \discardcolor(\ell)
  \wedge p_{S^{-i},j^{-\ell}}    \right ).
\]

As seen in the definition by extension, $p^\prime_{S,j}$ is defined by cases,
one for each possible pair $i, \ell$ of nodes and colors such that the node~$i$ is
the least central element of the $P_\ell$ color class, where~$P_\ell$
is the first star-shaped
color class. The extended Frege proof then shows
that $\neg \kneser n k(\vecp)$ implies $\neg \kneser{n-1}k(\vecpprime)$,
i.e., that if the variables $p_{S,j}$ define a coloring, then the
variables $p^\prime_{S,j}$
also define a coloring.  For this, it is necessary to show that
there is at least one star-shaped color class;
this is provable with a polynomial size extended Frege proof
(even a Frege proof) using the construction of Lemma~\ref{lem:SomeStarShaped}
and the counting techniques of~\cite{Buss:PHP}.

The extended Frege proof iterates this process of removing one node and one color
until it is shown that there is a  coloring of
$\binom{N(k)}{k}$. This is then refuted by exhaustively considering all graphs
with $\le N(k)$ nodes.
\qed

\subsection{Quasi-polynomial Size Frege Proofs}
\label{sec:FormalF}

This section discusses some of the details of the formalization of the
argument in Sect.~\ref{sec:KneserLovaszFrege} as quasi-polynomial size
Frege proofs, establishing
Theorem~\ref{thm:MainThmFrege}.
 First we will form an extended Frege proof, then modify it to become
a Frege proof.  As before, the proof starts with the assumption that
$\kneser n k(\vecp)$ is false.  As we describe next,
the extended Frege proof then introduces variables $\vecpprime$
by extension so that $\kneser{n-n/2k}{k}$ is false.  This process will
be repeated $O(\log n)$ times.  The final Frege proof is obtained by
unwinding the definitions by extension.

For a set~$X$ of formulas and $t>0$, let ``$|X| < t$"
denote a formula that is true when the number of true formulas
in~$X$ is less than~$t$. ``$|X|<t$'' can be
expressed by a formula of size polynomially bounded by
the total size of the formulas in~$X$, using the
construction in~\cite{Buss:PHP}.
``$|X|=t$" is defined similarly.

The formulas $\starshaped(i, \ell)$ and $\starshaped(\ell)$
are the same as in Sect.~\ref{sec:FormalEF}. A color~$\ell$
is now discarded if it is among the least $n/2k$ star-shaped
color classes.
\[
\discardcolor(\ell) ~:=~
   \starshaped(\ell) \wedge
   \left ( |\{\starshaped(\ell^\prime) : \ \ell^\prime \le \ell    \} |
   \le  n/2k    \right )
\]
The discarded nodes are the least central elements of the
discarded color classes.
\[
\discardnode(i) ~:=~ \bigor_{\ell} \Bigl[ \discardcolor(\ell) \land
    \starshaped(i,\ell) \land \bigand_{i^\prime<i} \neg\starshaped(i^\prime,\ell) \Bigr].
\]
The remaining, non-discarded colors and nodes are
renumbered to form an instance of $\kneser{n-n/2k}{k}$.  For this,
the formula $\renumnode(i^\prime,i)$ is true when the node~$i^\prime$ is the $i$th node that is
not discarded; similarly $\renumcolor(j^\prime,j)$ is true
when the color~$j^\prime$ is the $j$th color that
is not discarded.
\begin{eqnarray*}
\renumnode(i^\prime,i) &\kern-3pt:=&
\left ( | \{ \neg \discardnode (i^{\prime\prime})
     \kern-1pt:\kern-1pt i^{\prime\prime}{<}i^\prime \}| \kern-1pt=\kern-1pt i{-}1  \right ) \wedge \neg \discardnode(i^\prime)
\\[0.5ex]
\renumcolor(j^\prime,j) &\kern-3pt:=&
\left ( | \{ \neg \discardcolor (j^{\prime\prime})
     \kern-1pt:\kern-1pt j^{\prime\prime}{<}j^\prime \} | \kern-1pt=\kern-1pt j{-}1  \right ) \wedge \neg \discardcolor(j^\prime)
\end{eqnarray*}

For each $S=\{i_1, \dots, i_k \} \in \binom{n-n/2k}{k}$ and
$j \in \bracket{(n-n/2k)-2k+1}$, we define by extension
\[
p^\prime_{S,j} \equiv \bigor_{i^\prime_1, \dots i^\prime_k, j^\prime } \left ( \bigand_{t=1}^k \left(\renumnode(i^\prime_t,i_t) \right) \wedge \renumcolor(j^\prime,j) \wedge p_{\{i^\prime_1, \dots, i^\prime_k\},j^\prime}     \right ).
\]

The Frege proof then
argues that if the variables $p_{S,j}$
define a coloring, then the variables $p^\prime_{S,j}$ define a coloring,
i.e., that
$\neg \kneser{n}{k}(\vecp) \rightarrow \neg \kneser{n-n/2k}{k}(\vecpprime)$.
The main step for this is proving there are at least $n/2k$ star-shaped color
classes by formalizing the proof of Lemma~\ref{lem:ManyStarShaped}; this can be done
with polynomial size Frege proofs using the counting techniques
from~\cite{Buss:PHP}.
After that, it is straightforward to prove that, for each $S\in\binom{n-n/2k}{k}$ and
$j\in\bracket{(n-n/2k)-2k+1}$, the variable~$p_{S,j}^\prime$ is well-defined;
and that the $\vecpprime$ collectively falsify $\kneser{n-n/2k}{k}$.

This is iterated $O(\log n)$ times until fewer than
$N(k,1/2)$ nodes remain.  The proof concludes
with a hard-coded proof that there are no such colorings of
the finitely many small Kneser graphs.

To form the quasi-polynomial size Frege proof, we unwind the
definitions by extension. Each definition by extension was polynomial size;
they are nested to a depth of $O(\log n)$.  So the resulting Frege proof
is quasi-polynomial size.
\qed

\section{The Truncated Tucker Lemma}\label{sec:Tucker}

This section introduces the truncated Tucker lemma.
The usual (octahedral) Tucker lemma implies the truncated Tucker lemma
and the truncated Tucker lemma implies the Kneser-Lov\'asz theorem.
The truncated Tucker lemma is of particular interest, since its
propositional translations are only polynomial size; in contrast,
the propositional translations of the usual Tucker lemma
are of exponential size.
Additionally, there are polynomial size constant depth Frege proofs
of the Kneser-Lov\'asz tautologies from the truncated Tucker tautologies.

Our definition and proof of the truncated Tucker lemma
borrows techniques and notation from
Matou{\v s}ek~\cite{Matousek:ComboProofsKneser}.

\begin{definition}
Let $n\ge 1$. The {\em octahedral ball~$\mathcal{B}^n$} is:
\[
\mathcal{B}^n ~:=~
    \{(A,B) : A,B \subseteq \bracket{n} \text{ and } A \cap B = \emptyset \}.
\]
Let $1 \le k \le n$. The {\em truncated octahedral ball~$\mathcal{B}^n_k$} is:
\[
\mathcal{B}^n_k ~:=~
  \Bigl \{(A,B) :  A,B \in \binom n k \cup \{\emptyset\},\,
A \cap B = \emptyset, \text{ and }
 (A,B) \neq (\emptyset,\emptyset) \Bigr \}.
\]
\end{definition}

\begin{definition}
Let $n>1$. A mapping
$\lambda: \mathcal{B}^n \to \{1, \pm 2, \dots, \pm n\}$
is {\em antipodal}
if $\lambda(\emptyset,\emptyset) = 1$,
and for all other pairs $(A,B)\in \mathcal{B}^n$,
$\lambda(A,B) = -\lambda(B,A)$.

Let $n\ge 2k>1$. A mapping
$\lambda: \mathcal{B}^n_k \to \{\pm 2k, \dots, \pm n\}$
is {\em antipodal} if for all
$(A,B) \in \mathcal{B}^n_k$,
$\lambda(A,B) = -\lambda(B,A)$.

\end{definition}
Note that $-1$ is not in the range of~$\lambda$, and
$(\emptyset, \emptyset)$ is the only member of~$\mathcal{B}^n$
that is
mapped to~$1$ by~$\lambda$.

For $A \subseteq \bracket{n}$, let $A_{\le k}$ denote the least~$k$
elements of~$A$.  By convention $\emptyset_{\le k} = \emptyset$,
but otherwise the notation is used only when $|A|\ge k$.

The Tucker lemma uses the subset relation~$\subseteq$ on~$\bracket n$, but the
truncated Tucker lemma uses instead a stronger partial order~$\preceq$
on~$\binom n k$.

\begin{definition}
Let $\preceq$ be the partial order on sets in
$\binom{n}{k} \cup \{\emptyset\}$ defined by $A_1 \preceq A_2$ iff
$(A_1 \cup A_2)_{\le k} = A_2$.
\end{definition}

\begin{lemma}\label{lem:precpartialorder}
The relation $\preceq$ is a partial order with $\emptyset$ its least element.
\end{lemma}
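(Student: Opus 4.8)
The plan is to first replace the definition of $\preceq$ by a more transparent characterization and then check the partial-order axioms from it. \textbf{Claim:} for $A,B \in \binom{n}{k}\cup\{\emptyset\}$, we have $A \preceq B$ if and only if either $A = B = \emptyset$, or $B \ne \emptyset$ and every element of $A \setminus B$ exceeds $\max B$. This is just an unwinding of the definition: if $B = \emptyset$ then $(A\cup B)_{\le k} = A_{\le k} = \emptyset$ forces $A = \emptyset$; and if $B \ne \emptyset$, then since $B \subseteq A\cup B$ and $|B| = k$, the $k$ least elements of $A\cup B$ form exactly $B$ precisely when no element of $(A\cup B)\setminus B = A\setminus B$ is smaller than some element of $B$, i.e.\ when every element of $A\setminus B$ is larger than $\max B$.

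Given the claim, reflexivity is immediate (if $A=\emptyset$ then $A\preceq A$; if $|A|=k$ then $A\setminus A=\emptyset$), and $\emptyset \preceq A$ for every $A$ since $A_{\le k}=A$, so $\emptyset$ is the least element. For antisymmetry, assume $A\preceq B$ and $B\preceq A$; the cases where either set is empty are trivial, so take $|A|=|B|=k$ and suppose $A\ne B$. Then both $A\setminus B$ and $B\setminus A$ are nonempty, and choosing $a\in A\setminus B$ and $b\in B\setminus A$, the claim gives $a>\max B\ge b$ and $b>\max A\ge a$, a contradiction; hence $A=B$.

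For transitivity, suppose $A\preceq B$ and $B\preceq C$. If any of the three sets is empty the chain collapses to either $A=\emptyset$ or $\emptyset\preceq C$, both of which are fine; so assume $|A|=|B|=|C|=k$. First note the monotonicity fact that $\max B\ge\max C$: this is clear if $B=C$, and otherwise any $b\in B\setminus C$ satisfies $b>\max C$ by the claim while $b\le\max B$. Now take $a\in A$ with $a\notin C$. If $a\in B$, then $a\in B\setminus C$, so $a>\max C$. If $a\notin B$, then $a\in A\setminus B$, so $a>\max B\ge\max C$. In both cases $a>\max C$, so every element of $A\setminus C$ exceeds $\max C$, i.e.\ $A\preceq C$. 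The only mildly delicate point in the whole argument is this monotonicity fact, and it is exactly where one uses that all the sets involved have the same size $k$ (which is why $\preceq$ is defined on $\binom{n}{k}\cup\{\emptyset\}$ rather than on arbitrary subsets); everything else is bookkeeping.
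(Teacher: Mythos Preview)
Your proof is correct. The elementwise characterization you establish---$A\preceq B$ iff $B=\emptyset\Rightarrow A=\emptyset$, and otherwise every element of $A\setminus B$ exceeds $\max B$---is valid, and the verification of reflexivity, antisymmetry, transitivity, and leastness of $\emptyset$ from it is clean. The monotonicity step $\max B\ge\max C$ in your transitivity argument is exactly the right observation, and your treatment of the degenerate cases is fine.

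The paper takes a different and shorter route: it works directly with the defining equation and proves transitivity by the chain
\[
A_3=(A_2\cup A_3)_{\le k}=((A_1\cup A_2)_{\le k}\cup A_3)_{\le k}=(A_1\cup(A_2\cup A_3)_{\le k})_{\le k}=(A_1\cup A_3)_{\le k},
\]
which tacitly uses the identity $(X_{\le k}\cup Y)_{\le k}=(X\cup Y)_{\le k}$. Your approach trades that implicit set-operator identity for an explicit pointwise description of $\preceq$; the result is a few lines longer but arguably more transparent, since nothing is left to the reader. Either argument is perfectly adequate for this lemma.
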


\begin{proof}
It is clearly reflexive. For anti-symmetry, $A_1 \preceq A_2$ and $A_2 \preceq A_1$
imply that $A_1 = (A_1 \cup A_2)_{\le k} = (A_2 \cup A_1)_{\le k} = A_2$.
For transitivity: Suppose $A_1 \preceq A_2$ and $A_2 \preceq A_3$.
Then $(A_1 \cup A_2)_{\le k}=A_2$ and $(A_2 \cup A_3)_{\le k}=A_3$.
This implies that
\[
A_3 = (A_2 \cup A_3)_{\le k} = ((A_1 \cup A_2)_{\le k} \cup A_3)_{\le k}=(A_1 \cup (A_2 \cup A_3)_{\le k})_{\le k} = (A_1 \cup A_3)_{\le k}.
\]
Therefore $A_1 \preceq A_3$. That $\emptyset$ is the least element is clear
from the definition.
\qed
\end{proof}

\begin{definition}\label{def:complementary}
Two pairs $(A_1, B_1)$ and $(A_2, B_2)$ in $\mathcal{B}^n$ are
{\em complementary} w.r.t.\ an antipodal map~$\lambda$
on~$\mathcal{B}^n$ if
$A_1 \subseteq A_2$, $B_1 \subseteq B_2$ and
$\lambda(A_1, B_1) = - \lambda(A_2, B_2)$.

For $(A_1, B_1)$ and $(A_2, B_2)$ in $\mathcal{B}^n_k$,
write $(A_1, B_1) \preceq (A_2, B_2)$ when
$A_1 \preceq\penalty10000 A_2$, $B_1 \preceq B_2$,
and $A_i \cap B_j = \emptyset$ for $i,j \in \{1,2\}$.
The pairs $(A_1, B_1)$ and $(A_2, B_2)$ are
{\em $k$-complementary} w.r.t.\ an antipodal map~$\lambda$
on~$\mathcal{B}^n_k$
if $(A_1, B_1) \preceq (A_2, B_2)$ and
$\lambda(A_1, B_1) = - \lambda(A_2, B_2)$.
\end{definition}

\begin{theorem}[Tucker lemma]\label{thm:Tucker}
If $\lambda: \mathcal{B}^n \to \{1, \pm 2, \dots, \pm n \}$ is
antipodal, then there are two elements in~$\mathcal{B}^n$ that are
complementary.
\end{theorem}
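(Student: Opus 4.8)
The plan is to recognize $\mathcal{B}^n$, with the base point $(\emptyset,\emptyset)$ deleted, as the face poset of the boundary of the $n$-dimensional cross-polytope: a pair $(A,B)$ with $A\cap B=\emptyset$ names the face spanned by the basis vectors $e_i$ for $i\in A$ together with $-e_i$ for $i\in B$, whose dimension is $|A|+|B|-1$, and the involution $(A,B)\mapsto(B,A)$ is exactly the antipodal map, which is free on the nonempty pairs. Passing to the barycentric subdivision of that boundary gives an antipodally symmetric triangulation of $S^{n-1}$ whose vertices are the nonempty pairs and whose edges join exactly the comparable pairs, i.e.\ those with $A_1\subseteq A_2$ and $B_1\subseteq B_2$. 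Thus a ``complementary pair'' in the theorem is precisely an edge of this triangulation whose two endpoint labels are $+j$ and $-j$. The conditions $\lambda(\emptyset,\emptyset)=1$ and $-1\notin\mathrm{range}(\lambda)$ mean that no face receives the label $1$ and that $(\emptyset,\emptyset)$ can never be an endpoint of a complementary pair, so it may be discarded; after the relabeling $j\mapsto j-1$ what remains is exactly the classical (octahedral) Tucker lemma, that an antipodal labeling of this triangulation of $S^{n-1}$ by the labels $\{\pm 1,\dots,\pm(n-1)\}$ has a complementary edge.

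To prove that I would carry out the combinatorial path-following argument of Matou\v sek~\cite{Matousek:ComboProofsKneser} (see also Ziegler~\cite{Ziegler:GeneralKneser}). Assume for contradiction that no complementary edge exists, so that along every chain of faces the labels meet each sign-pair $\{+j,-j\}$ at most once. One singles out a class of \emph{alternating} simplices --- chains of faces whose label multiset, ordered by absolute value, has roughly the shape $-1,+1,-2,+2,\dots,\pm(m-1)$ followed by a single signed copy of~$m$ --- and builds a graph $G$ whose vertices are the alternating simplices, joining two of them when one arises from the other by inserting or deleting a single face while staying alternating. A door-counting argument of the kind common to all proofs of Tucker's lemma then shows that every alternating simplex has degree one or two in $G$; counting the degree-one vertices modulo two, while noting that the free antipodal symmetry $\tau\mapsto-\tau$ permutes the alternating simplices (it does, because $\lambda$ is antipodal and, by hypothesis, produces no complementary edge) and therefore pairs the degree-one vertices up, shows this number to be simultaneously odd and even, the desired contradiction. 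An equivalent route is induction on $n$: cut along the equatorial subcomplex carried by the labels $\{\pm 1,\dots,\pm(n-1)\}$ of one lower rank, which triangulates an $S^{n-2}$.

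The step I expect to be the main obstacle is exactly the bookkeeping just sketched: pinning down the precise notion of ``alternating simplex'' and the adjacency of $G$ so that the degree bound is genuinely at most two and the degree-one vertices form one cleanly parametrized parity class. This is the delicate core of every proof of Tucker's lemma, and here it must be run inside the combinatorics of chains of disjoint signed subsets rather than for an abstract triangulation. A secondary, routine matter is checking that restoring the base point $(\emptyset,\emptyset)$ with its exceptional label $1$ never alters the alternating status of a simplex, which is immediate because $1$ is the unique value whose negative is missing from the range. Finally, if topological input is permitted the whole middle paragraph reduces to one application of the Borsuk--Ulam theorem --- a labeling with no complementary edge would yield a simplicial antipodal map $S^{n-1}\to S^{n-2}$, which is impossible --- but the combinatorial argument is the one in the spirit of this paper.
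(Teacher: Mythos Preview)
The paper does not prove this theorem at all: immediately after stating Theorems~\ref{thm:Tucker} and~\ref{thm:TruncatedTucker} it writes ``For a proof of Theorem~\ref{thm:Tucker}, see \cite{Matousek:ComboProofsKneser},'' and the appendix only derives the \emph{truncated} Tucker lemma from the full one. So there is nothing in the paper to compare your argument against; the paper treats the octahedral Tucker lemma as a black box.

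Your proposal is a faithful outline of precisely the Matou\v{s}ek argument the paper cites, including the identification of $\mathcal{B}^n\setminus\{(\emptyset,\emptyset)\}$ with the face poset of $\partial\Diamond^n$, the observation (which the paper also notes) that the label~$1$ occurs only at the base point so that the nonempty pairs carry an antipodal $\{\pm 2,\dots,\pm n\}$-labeling, and the alternating-simplex path-following parity count. That is the right plan, and your assessment of where the difficulty lies --- getting the definition of ``alternating'' and the adjacency in the auxiliary graph~$G$ exactly right so that degrees are at most two --- is accurate. If you were to write it out in full you would be reproducing \cite{Matousek:ComboProofsKneser}, which is more than the paper itself attempts.
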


\begin{theorem}[Truncated Tucker]\label{thm:TruncatedTucker}
Let $n \ge 2k>1$. If $\lambda: \mathcal{B}^n_k \to \{\pm 2k \dots, \pm n \}$ is
antipodal, then there are two elements in~$\mathcal{B}^n_k$ that
are $k$-complementary.
\end{theorem}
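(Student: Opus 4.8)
The plan is to derive Theorem~\ref{thm:TruncatedTucker} from the usual octahedral Tucker lemma (Theorem~\ref{thm:Tucker}). Suppose for contradiction that $\lambda\colon \mathcal{B}^n_k \to \{\pm 2k,\dots,\pm n\}$ is antipodal but admits no $k$-complementary pair. First I would extend $\lambda$ to an antipodal map $\widetilde\lambda\colon \mathcal{B}^n \to \{1,\pm 2,\dots,\pm n\}$. Call a pair $(A,B)\in\mathcal{B}^n$ \emph{large} if $|A|\ge k$ or $|B|\ge k$, and \emph{small} otherwise (a symmetric condition). For a large pair put $A^{*}:=A_{\le k}$ if $|A|\ge k$ and $A^{*}:=\emptyset$ otherwise, define $B^{*}$ symmetrically, and set $\widetilde\lambda(A,B):=\lambda(A^{*},B^{*})$; here $(A^{*},B^{*})\in\mathcal{B}^n_k$, since $A^{*}$ and $B^{*}$ are disjoint, each is empty or a $k$-set, and they are not both empty because $(A,B)$ is large. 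For a small pair $(A,B)\ne(\emptyset,\emptyset)$, let $r:=|A|+|B|\in\{1,\dots,2k-2\}$ and set $\widetilde\lambda(A,B):=r+1$ if $\max(A\cup B)\in A$, and $\widetilde\lambda(A,B):=-(r+1)$ otherwise; finally set $\widetilde\lambda(\emptyset,\emptyset):=1$. Routine checks show that $\widetilde\lambda$ is antipodal, attains $1$ only at $(\emptyset,\emptyset)$, never attains $-1$, and that large pairs receive labels of absolute value $\ge 2k$ while small pairs receive labels of absolute value $\le 2k-1$.

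Next I would apply Theorem~\ref{thm:Tucker} to $\widetilde\lambda$ to obtain a complementary pair: $(A_1,B_1),(A_2,B_2)\in\mathcal{B}^n$ with $A_1\subseteq A_2$, $B_1\subseteq B_2$, and $\widetilde\lambda(A_1,B_1)=-\widetilde\lambda(A_2,B_2)$. Since the two labels have equal absolute value, the magnitude separation above forces either both pairs to be small or both to be large. If both are small, the common magnitude equals $|A_1|+|B_1|+1=|A_2|+|B_2|+1$ (it cannot be $1$, as $-1$ is unattained), so $A_1\cup B_1$ and $A_2\cup B_2$ have equal size; combined with $A_1\cup B_1\subseteq A_2\cup B_2$ this gives $A_1\cup B_1=A_2\cup B_2$, whence $A_1=A_2$ and $B_1=B_2$ by disjointness, contradicting $\widetilde\lambda(A_1,B_1)=-\widetilde\lambda(A_2,B_2)\ne 0$.

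In the remaining case both pairs are large, and I claim $(A_1^{*},B_1^{*})$ and $(A_2^{*},B_2^{*})$ form a $k$-complementary pair for $\lambda$. The label equality $\lambda(A_1^{*},B_1^{*})=-\lambda(A_2^{*},B_2^{*})$ is immediate, and it also forces these pairs to be distinct, since $0$ is not in the range of $\lambda$. The heart of the argument is that $A_1\subseteq A_2$ implies $A_1^{*}\preceq A_2^{*}$: if $|A_1|<k$ this holds because $A_1^{*}=\emptyset$, which by Lemma~\ref{lem:precpartialorder} is the $\preceq$-least element; and if $|A_1|\ge k$ (hence $|A_2|\ge k$), then $(A_1)_{\le k}\cup(A_2)_{\le k}$ is a subset of $A_2$ containing the $k$ least elements of $A_2$, so its own $k$ least elements are exactly $(A_2)_{\le k}$, which is precisely the statement $A_1^{*}\preceq A_2^{*}$. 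Symmetrically $B_1^{*}\preceq B_2^{*}$. Finally each cross-intersection $A_i\cap B_j$ is empty --- the cases $(i,j)\in\{(1,1),(2,2)\}$ by definition of $\mathcal{B}^n$, and $A_1\cap B_2\subseteq A_2\cap B_2=\emptyset$ and $A_2\cap B_1\subseteq A_2\cap B_2=\emptyset$ --- so the same holds with $A_i^{*},B_j^{*}$ in place of $A_i,B_j$. Hence $(A_1^{*},B_1^{*})\preceq(A_2^{*},B_2^{*})$, contradicting the assumption that $\lambda$ has no $k$-complementary pair, and proving the theorem.

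I expect the main obstacle to be organizing the extension $\widetilde\lambda$ so that the two regimes interact correctly: the labeling of small pairs must be antipodal, must avoid a complementary pair among small pairs, and must have magnitudes lying strictly below those coming from $\lambda$, so that a complementary pair for $\widetilde\lambda$ cannot mix a small and a large pair. The conceptual crux, however, is the monotonicity statement ``$A_1\subseteq A_2 \Rightarrow A_1^{*}\preceq A_2^{*}$'': this is exactly why the truncated Tucker lemma is stated with the partial order $\preceq$ rather than $\subseteq$, and verifying that the definitions of $\preceq$, of $A_{\le k}$, and of $k$-complementarity all mesh is where the real content lies. The remaining verifications --- antipodality of $\widetilde\lambda$, the magnitude bounds, and the inheritance of disjointness --- are routine bookkeeping.
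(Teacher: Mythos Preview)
Your proposal is correct and takes essentially the same approach as the paper's appendix: extend $\lambda$ to all of $\mathcal{B}^n$ by sending any pair with a component of size $\ge k$ to $\lambda$ evaluated at the truncations $A_{\le k}$, $B_{\le k}$ (with $\emptyset$ substituted for a too-small component), label the remaining ``small'' pairs by $\pm(1+|A|+|B|)$, and then invoke the octahedral Tucker lemma. The only cosmetic differences are that you merge the paper's Cases~II and~III into a single ``large'' case and use the rule $\max(A\cup B)\in A$ in place of the paper's auxiliary total order to choose signs on small pairs; the substantive step---that $A_1\subseteq A_2$ forces $(A_1)_{\le k}\preceq (A_2)_{\le k}$---is carried out identically.
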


For a proof of Theorem~\ref{thm:Tucker}, see \cite{Matousek:ComboProofsKneser}.
An appendix to the arXiv version of this paper proves Theorem~\ref{thm:TruncatedTucker}
from Theorem~\ref{thm:Tucker}.

The truncated Tucker lemma has polynomial size propositional translations.
For each $(A,B) \in \mathcal{B}^n_k$, and for each $i \in \{\pm 2k, \dots, \pm n\}$,
let $p_{A,B,i}$ be a propositional variable with the intended meaning that
$p_{A,B,i}$ is true when $\lambda(A,B) = i$. The following
formula $\text{Ant}(\vecp)$ states that the map is total and antipodal:
\[
\bigand_{(A,B) \in \mathcal{B}^n_k}
 \,\, \bigor_{i \in \{\pm 2k, \dots, \pm n\}}
  ( p_{A,B,i} \land p_{B,A,-i} ).
\]
The following formula $\text{Comp}(\vecp)$ states that there
exists two elements in~$\mathcal{B}^n_k$ that are $k$-complementary:
\[
\bigor_{\substack{  (A_1,B_1) ,(A_2,B_2) \in \mathcal{B}^n_k,\\
(A_1,B_1) \preceq (A_2,B_2) \\  i \in \{\pm 2k, \dots, \pm n\} } }  \left ( p_{A_1,B_1,i} \wedge p_{A_2,B_2,-i} \right ).
\]
The truncated Tucker tautologies are defined to be
$\text{Ant}(\vecp) \limplies \text{Comp}(\vecp)$.
(We could add an additional hypothesis, that for each $A,B$ there is
at most one~$i$ such that $p_{A,B,i}$, but this is not needed for
the Tucker tautologies to be valid.)
There are $< n^{2k}$ members $(A,B)$ in~$\mathcal{B}^n_k$.
Hence, for fixed~$k$, there are only polynomially many
variables $p_{A,B,i}$, and the truncated Tucker
tautologies have size polynomially bounded by~$n$.
On the other hand, the propositional translation of the
usual Tucker lemma requires an exponential number of propositional variables
in~$n$, since the cardinality of~$\mathcal{B}^n$ is exponential in~$n$.

\begin{proof}[Theorem~\ref{thm:KneserLovasz} from the truncated Tucker lemma]
Let $c: \binom{n}{k} \to \{2k, \dots, n\}$ be a $(n{-}2k{+}1)$-coloring of
$\binom n k$. We show that this implies the existence of an antipodal
map $\lambda$ on $\mathcal{B}^n_k$ that has no $k$-complementary
pairs.
Let $\le$ be a total order on $\binom{n}{k} \cup \{\emptyset\}$ that refines the
partial order $\preceq$. Define $\lambda(A,B)$ to be $c(A)$ if
$A > B$ and $-c(B)$ if $B > A$.
We argue that there are no $k$-complementary pairs in~$\mathcal{B}^n_k$
with respect to $\lambda$. Suppose there are, say $(A_1, B_1)$ and $(A_2, B_2)$.
Since $\lambda$ must assign these opposite signs, either
$A_1 < B_1 \le B_2 < A_2$ or $B_1 < A_1 \le A_2 < B_2$.
In the former case it must be that, $c(B_1) = c(A_2)$ and in the latter case
that $c(A_1) = c(B_2)$. Since $B_1 \cap A_2$ and $A_1 \cap B_2$ are empty
in either case we have a contradiction, since $c$ was assumed to be a
coloring.
\qed
\end{proof}

The above proof of the Kneser-Lov\'asz theorem from the truncated Tucker lemma
can be readily translated into polynomial size constant depth Frege proofs.

\begin{question}
Do the propositional translations of the Truncated Tucker lemma
have short (extended) Frege proofs?
\end{question}

\bibliographystyle{splncs03}
\bibliography{logic,cstheory}

\long\def\eat#1{\relax}

\section{Appendix: Proof of the Truncated Tucker Lemma}
\label{sec:TruncatedTuckerProof}

We prove the truncated Tucker lemma from the Tucker lemma:

\begin{proof}[of Theorem~\ref{thm:TruncatedTucker} from Theorem~\ref{thm:Tucker}]
We show the contrapositive.
Suppose Theorem~\ref{thm:TruncatedTucker} is false.
In other words, there is some antipodal $\lambda: \mathcal{B}^n_k \to \{\pm 2k, \dots, \pm n\}$
and there are no $k$-complementary pairs of elements
in~$\mathcal{B}^n_k$ with respect to~$\lambda$.
We will define an antipodal map
$\lambda^\prime: \mathcal{B}^n \to \{1, \pm 2, \dots, \pm n\}$
that has no complementary pairs of elements in~$\mathcal{B}^n$ with respect
to~$\lambda^\prime$, in violation of Theorem~\ref{thm:Tucker}.

Let $\le$ be a total order on the subsets
of~$\bracket{n}$ that respects cardinalities
and refines $\preceq$ on elements in $\binom{n}{k} \cup \{\emptyset \}$.
In other words, if $|A| < |B|$  or if $A \preceq B$
then $A \le B$.
Similarly to a construction
in~\cite{Matousek:ComboProofsKneser},
$\lambda^\prime(A,B)$ is defined by cases:

\begin{enumerate}[C{a}se I:]
\item If $|A|<k$ and $|B|<k$, then
\[
\lambda^\prime(A,B) = \begin{cases} 1+|A|+|B| & \text{if } A \ge B \\
-(1+|A|+|B|)                                  & \text{if } B > A.
\end{cases}
\]

\item If $|A|=k$ and $|B|<k$, then $\lambda^\prime(A,B) = \lambda(A,\emptyset)$.
Similarly, if $|A|<k$ and $|B|=k$, then $\lambda^\prime(A,B) = \lambda(\emptyset,B)$.

\item If $|A| \ge k$ and $|B| \ge k$,
then $\lambda^\prime(A,B) = \lambda(A_{\le k},B_{\le k})$.
\end{enumerate}

The map $\lambda^\prime$ is clearly antipodal since $\lambda$ is.
Let $(A_1, B_1)$ and $(A_2, B_2)$ be members
of~$\mathcal{B}^n$ with $A_1 \subseteq A_2$, $B_1 \subseteq B_2$.
We wish to show they are not a complementary pair for~$\lambda^\prime$,
namely that
$\lambda^\prime(A_1,B_1) \neq - \lambda^\prime(A_2, B_2)$.
The argument splits into cases.

\begin{enumerate}
\item $\lambda^\prime(A_1,B_1)$ and $\lambda^\prime(A_2, B_2)$
are assigned by Case~I.
If $A_1=A_2$ and $B_1=B_2$, then clearly
$\lambda^\prime(A_1,B_1) \neq - \lambda^\prime(A_2,B_2)$.
Otherwise, at least one of the inclusions
$A_1 \subseteq A_2$ and $B_1 \subseteq B_2$ is proper, so
$|A_1|+|B_1| < |A_2|+|B_2|$.
But $|\lambda^\prime(A_1,B_1)|=1+|A_1|+|B_1|$
and $|\lambda^\prime(A_2,B_2)|=1+|A_2|+|B_2|$, so
$\lambda^\prime(A_1,B_1) \neq - \lambda^\prime(A_2,B_2)$.

\item $\lambda^\prime(A_1, B_1)$ and $\lambda^\prime(A_2, B_2)$
are assigned by Case~II.
Without loss of generality, $|A_1|=k$ and $|B_1|<k$.
So then $A_2=A_1$ and $|B_2|<k$. But then
$\lambda^\prime(A_1,B_1) = \lambda(A_1,\emptyset)$ and
$\lambda^\prime(A_2, B_2)=\lambda(A_1, \emptyset)$,
so
$\lambda^\prime(A_1,B_1) \neq - \lambda^\prime(A_2,B_2)$.

\item $\lambda^\prime(A_1, B_1)$ and $\lambda^\prime(A_2,B_2)$
are both assigned by Case~III.
It is clear that $A_{i,\le k} \cap B_{j, \le k} = \emptyset$ for $i, j \in \{1,2\}$,
since $A_2 \cap B_2 = \emptyset$.
We claim that $A_{1, \le k} \preceq A_{2,\le k}$ and
$B_{1,\le k} \preceq B_{2,\le k}$.
This is because $A_1 \subseteq A_2$, so
\[
(A_{1,\le k} \cup A_{2,\le k})_{\le k} = (A_1 \cup A_2)_{\le k} = A_{2,\le k}.
\]
Therefore $A_{1, \le k } \preceq A_{2, \le k}$.
The same argument shows that $B_{1, \le k } \preceq B_{2, \le k}$.
Since there are no $k$-complementary pairs with respect to~$\lambda$,
it must be that $\lambda(A_{1,\le k},B_{1, \le k}) \neq - \lambda(A_{2,\le k},B_{2,\le k})$.
Also, $\lambda^\prime(A_1,B_1) = \lambda(A_{1, \le k}, B_{1, \le k})$
and $\lambda^\prime(A_2, B_2) = \lambda(A_{2, \le k},B_{2, \le k})$.
Hence $\lambda^\prime(A_1, B_1) \neq - \lambda^\prime(A_2, B_2)$.

\item $\lambda^\prime(A_1, B_1)$, and $\lambda^\prime(A_2, B_2)$
are assigned by Cases II and~III.
If this happens, it must be that $(A_1, B_1)$ is assigned
as in Case II, and $(A_2, B_2)$ is assigned as in Case III. Without loss of
generality, say $|A_1|=k$, and $|B_1|<k$.
We show that $(A_1, \emptyset) \preceq (A_2, B_{2,\le k})$.
By the same argument as the previous case,
$A_{1, \le k} \preceq A_{2,\le k}$.
As remarked earlier, $\emptyset \preceq B_{2, \le k}$, since
the empty set is the least element in the $\preceq$ partial order.
Also, the four sets $A_{1,\le k} \cap \emptyset$,
$A_{1, \le k} \cap B_{2, \le k}$,
$\emptyset \cap B_{2, \le k}$, and $A_{2, \le k} \cap B_{2, \le k}$
are empty.

Since there are no $k$-complementary pairs with respect to~$\lambda$,
it must be that $\lambda(A_{1,\le k},\emptyset) \neq - \lambda(A_{2,\le k},B_{2,\le k})$.
By definition, $\lambda^\prime(A_1,B_1) = \lambda(A_{1, \le k}, \emptyset)$
and $\lambda^\prime(A_2, B_2) = \lambda(A_{2, \le k},B_{2, \le k})$.
Hence $\lambda^\prime(A_1, B_1) \neq - \lambda^\prime(A_2, B_2)$.

\item The only remaining case is when one of $\lambda^\prime(A_1, B_1)$
and $\lambda^\prime(A_2, B_2)$ is assigned by Case~I, and the other is assigned
by Case II or~III. It must be that $\lambda^\prime(A_1, B_1)$ is assigned
by Case~I and $\lambda^\prime(A_2, B_2)$ by Case II or~III.
Observe that $|\lambda^\prime(A_1, B_1)|=1+|A_1|+|B_1| \le 2k-1$,
and that $|\lambda^\prime(A_2,B_2)| \ge 2k$. Therefore
$\lambda^\prime(A_1,B_1) \neq -\lambda^\prime(A_2,B_2)$.	
\end{enumerate}

This establishes that $\lambda^\prime$ is an antipodal map
with no complementary pairs, hence Theorem~\ref{thm:Tucker} is false.
This completes the proof of the contrapositive.
\qed
\end{proof}

\section{Appendix: Optimal Colorings of Kneser Graphs}\label{sec:OptimalColoring}

It is well-known that $\binom{n}{k}$ has
an $(n-2k+2)$-coloring~\cite{Lovasz:KneserTheorem}.
A simple construction of such a coloring, which
we call~$c_1$, is given here for completeness as follows.
For $S \in \binom{n}{k}$, define $c_1(S)$ by:
\begin{itemize}
\item[\rm(1)]  If $S \not\subseteq \bracket{2k-1}$,
let $c_1(S) = \max(S)-(2k-2)$.  Clearly $1 < c_1(S) \le n-2k+2$.
\item[\rm(2)] Otherwise, let $c_1(S) = 1$.
\end{itemize}
We claim that $c_1$ defines a proper coloring.
By construction, if $c_1(S)>1$, then $c_1(S)+(2k-2)\in S$.
Thus, if $c_1(S) = c_1(S^\prime) > 1$, then $S\cap S^\prime \not=\emptyset$
and $S$ and~$S^\prime$ are not joined by an edge in the Kneser graph.
On the other hand, if $c_1(S)=1$,
then $S$ contains $k$ elements from
the set $[2k-1]$.
Any two such
subsets have nonempty intersection, and therefore
if $c_1(S) = c_1(S^\prime) = 1$,
then again $S \cap S^\prime \not= \emptyset$.
Note that $c_1$ contains $n-2k+1$ many star-shaped
color classes, and only one non-star-shaped color class.

In view of Lemma~\ref{lem:ManyStarShaped}, it is interesting
to ask whether it is possible to give $(n-2k+2)$-colorings
with fewer star-shaped color classes and more non-star-shaped
color classes.  The next theorem
gives the best construction we know.
\begin{theorem}\label{thm:fewstarshapes}
Let $k\ge 1$ and $n\ge 3k+3$.  There is an
$(n{-}2k{+}2)$ coloring~$c_{k-1}$
of $\binom n k$
which has $k-1$ many non-star-shaped color classes and only
$n-3k+3$ many star-shaped color classes.
\end{theorem}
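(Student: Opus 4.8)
The plan is to modify the coloring~$c_1$ above so that its unique non-star-shaped class is replaced by $k-1$ of them. As with~$c_1$, every set having a large element will be colored by that element, giving star-shaped classes; the new ingredient is to color the sets lying inside a fixed initial segment with $k-1$ colors, \emph{all} of whose classes are non-star-shaped. Since $3k-3=3(k-1)$, I would use the segment $\bracket{3k-3}$, fix a partition of it into $k-1$ triples $B_1,\dots,B_{k-1}$, and define, for $S\in\binom nk$,
\[
c_{k-1}(S)=
\begin{cases}
\text{the least $j$ with $|S\cap B_j|\ge 2$} & \text{if $S\subseteq\bracket{3k-3}$,}\\
\max(S)-2k+2 & \text{otherwise.}
\end{cases}
\]
In the first case such a~$j$ exists since $|S|=k>k-1$ forces $|S\cap B_j|\ge 2$ for some~$j$; the resulting color lies in $\{1,\dots,k-1\}$. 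In the second case $\max(S)\ge 3k-2$, so the color lies in $\{k,\dots,n-2k+2\}$, a range disjoint from the first. (When $k\le 2$, $\bracket{3k-3}$ is $\emptyset$ or $\{1,2,3\}$ and $c_{k-1}$ coincides with~$c_1$; the argument below works uniformly for all $k\ge 1$.)

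First I would check that $c_{k-1}$ is a proper coloring of the $(n,k)$-Kneser graph. Let $S,S'\in\binom nk$ be disjoint. They cannot share a second-case color, since two sets with the same maximum contain that maximum. They cannot share a first-case color~$j$: then $S\cap B_j$ and $S'\cap B_j$ are subsets of the $3$-element set~$B_j$, each of size at least~$2$, so they meet, whence $S\cap S'\ne\emptyset$. And the two cases use disjoint color ranges. So $c_{k-1}$ is a proper $(n-2k+2)$-coloring, and the classes of colors $1,\dots,k-1$ are exactly those contained in $\bracket{3k-3}$.

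The heart of the matter is to show that each class~$P_j$ (for $1\le j\le k-1$) is non-star-shaped. Write $B_j=\{a,b,c\}$. I would choose, from each triple~$B_{j'}$ with $j'\ne j$, two distinct elements, placing one into a set~$Y$ and the other into a set~$Y'$; then $Y,Y'\subseteq\bracket{3k-3}\setminus B_j$ are disjoint, $|Y|=|Y'|=k-2$, and each meets every triple other than~$B_j$ in exactly one element. Put $S_1=\{a,b\}\cup Y$, $S_2=\{a,c\}\cup Y$, and $S_3=\{b,c\}\cup Y'$. Each $S_i$ has $k$ elements, meets $B_j$ in two elements and every other triple in one, so $c_{k-1}(S_i)=j$; that is, $S_1,S_2,S_3\in P_j$. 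Since $Y$ and $Y'$ are disjoint from each other and from~$B_j$,
\[
S_1\cap S_2\cap S_3~\subseteq~\bigl(\{a,b\}\cap\{a,c\}\cap\{b,c\}\bigr)\cup\bigl(Y\cap Y'\bigr)~=~\emptyset,
\]
so $\bigcap P_j=\emptyset$ and $P_j$ is non-star-shaped (and non-empty).

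Finally I would tally the classes: colors $1,\dots,k-1$ yield $k-1$ non-star-shaped classes; colors $k,\dots,n-2k+2$ yield $n-3k+3$ classes, the one of color $m-2k+2$ being star-shaped with central element~$m$; and $(k-1)+(n-3k+3)=n-2k+2$ accounts for every color. Hence $c_{k-1}$ has exactly $k-1$ non-star-shaped and $n-3k+3$ star-shaped color classes. The only delicate point is arranging that the witnesses $S_1,S_2,S_3$ both land in~$P_j$ and have empty common intersection, which is exactly what the ``two distinct representatives from each triple other than~$B_j$'' choice achieves; the hypothesis $n\ge 3k+3$ leaves ample room (any $n\ge 3k-3$ suffices).
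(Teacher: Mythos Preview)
Your proof is correct and follows essentially the same approach as the paper: partition an initial segment of size $3(k-1)$ into $k-1$ triples, color a set by the first triple it meets in at least two points, and color the remaining sets by their maximum element. The paper phrases this uniformly---it partitions all of $\bracket{n}$ into $n-3k+3$ singletons together with the $k-1$ triples and defines $c_{k-1}(S)$ as the least~$i$ with $|S\cap T_i|>\frac12|T_i|$---but on singletons the majority condition is just ``$T_i\subseteq S$'', so this reduces to your two-case definition up to a relabelling of the colors. Your explicit construction of $S_1,S_2,S_3$ witnessing non-star-shapedness is more detailed than the paper, which simply asserts the count of non-star-shaped classes.
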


\begin{proof}
To construct $c_{k-1}$, partition the set
$\bracket n$ into $n-2k+2$ many subsets
$T_1, \ldots, T_{n-2k+2}$ as follows.  For $i\le n-3k+3$,
$T_i$~is chosen to be a singleton set, say
$T_i=\{n-i+1\}$.  The remaining $k-1$ many
$T_i$'s are subsets of size~3, say
$T_i = \{ j-2, j-1, j \}$ where $j=i-(n-3k+3)$.
Since $n = (n-3k+3) + 3(k-1)$,
the sets $T_i$ partition~$\bracket n$, and
each $T_i$ has cardinality either 1 or~3.
For $S$ a subset of~$n$ of cardinality~$k$, define
the color $c_{k-1}(S)$ to equal the least $i$ such that
\[
| S\cap T_i | ~>~ \frac 12 |T_i|.
\]
We claim there must exist such an~$i$.  If not,
then $S$ contains no members of the singleton subsets~$T_i$
and at most one member of each of the subsets~$T_i$
of size three.  But there are only $k-1$ many subsets of size
three, contradicting $|S|=k$.

It is easy to check that
if $c_{k-1}(S)=c_{k-1}(S^\prime)$
then $S\cap S^\prime \not= \emptyset$.
Thus $c_{k-1}$ is a coloring.
Furthermore, $c_{k-1}$ has $k-1$ many non-star-shaped classes
and $n-3k+3$ many star-shaped classes.
\qed
\end{proof}

Theorem~\ref{thm:fewstarshapes} can be extended
to show that when $n<3k+3$, there is a $n-2k+2$ coloring
with no star-shaped color class.
The proof construction uses a similar idea, based on the
fact that $\bracket n$ can be partitioned into
$n-2k+2$ many subsets, each of odd cardinality~$\ge 3$.
We leave the details to the reader.

\begin{question}
Do there exist $(n-2k+2)$-colorings of the $(n,k)$-Kneser graphs
with more than $k-1$ many non-star-shaped color classes?
\end{question}

\end{document}